\newtheorem*{thm*}{Theorem}
\newtheorem{thm}{Theorem}
\newtheorem{cor}[thm]{Corollary}
\newcommand{\N}{\mathbb{N}}
\newcommand{\Z}{\mathbb{Z}}
\newcommand{\col}{\mathrm{col}}
\begin{document}

\title{On the Alon-Tarsi Number and Chromatic-choosability of Cartesian Products of Graphs}

\author{Hemanshu Kaul\footnote{Department of Applied Mathematics, Illinois Institute of Technology, Chicago, IL 60616. E-mail: {\tt kaul@iit.edu}} $\;$ and Jeffrey A. Mudrock\footnote{Department of Applied Mathematics, Illinois Institute of Technology, Chicago, IL 60616. E-mail: {\tt jmudrock@hawk.iit.edu}} }

\date{}

\maketitle

\begin{abstract}
We study the list chromatic number of Cartesian products of graphs through the Alon-Tarsi number as defined by Jensen and Toft (1995) in their seminal book on graph coloring problems. The \emph{Alon-Tarsi number} of $G$, $AT(G)$, is the smallest $k$ for which there is an orientation, $D$, of $G$ with max indegree $k\!-\!1$ such that the number of even and odd circulations contained in $D$ are different. It is known that $\chi(G) \leq \chi_\ell(G) \leq \chi_p(G) \leq AT(G)$, where  $\chi(G)$ is the chromatic number, $\chi_\ell(G)$ is the list chromatic number, and $\chi_p(G)$ is the paint number of $G$. In this paper we find families of graphs $G$ and $H$ such that $\chi(G \square H) = AT(G \square H)$, reducing this sequence of inequalities to equality.

We show that the Alon-Tarsi number of the Cartesian product of an odd cycle and a path is always equal to 3. This result is then extended to show that if $G$ is an odd cycle or a complete graph and $H$ is a graph on at least two vertices containing the Hamilton path $w_1, w_2, \ldots, w_n$ such that for each $i$, $w_i$ has a most $k$ neighbors among $w_1, w_2, \ldots, w_{i-1}$, then $AT(G \square H) \leq \Delta(G)+k$ where $\Delta(G)$ is the maximum degree of $G$.  We discuss other extensions for $G \square H$, where $G$ is such that $V(G)$ can be partitioned into odd cycles and complete graphs, and $H$ is a graph containing a Hamiltonian path. We apply these bounds to get chromatic-choosable Cartesian products, in fact we show that these families of graphs have $\chi(G) = AT(G)$, improving previously known bounds.
\medskip

\noindent {\bf Keywords.} Cartesian product of graphs, graph coloring, list coloring, paint number, Alon-Tarsi number.

\noindent \textbf{Mathematics Subject Classification.} 05C30, 05C15.

\end{abstract}

\section{Introduction}\label{intro}

In this paper all graphs are finite, and all graphs are either simple graphs or simple directed graphs.  Generally speaking we follow West~\cite{W01} for terminology and notation.  List coloring is a well known variation on the classic vertex coloring problem, and it was introduced independently by Vizing~\cite{V76} and Erd\H{o}s, Rubin, and Taylor~\cite{ET79} in the 1970's.  In the classic vertex coloring problem we wish to color the vertices of a graph $G$ with as few colors as possible so that adjacent vertices receive different colors, a so-called \emph{proper coloring}. The chromatic number of a graph, denoted $\chi(G)$, is the smallest $k$ such that $G$ has a proper coloring that uses $k$ colors.  For list coloring, we associate a \emph{list assignment}, $L$, with a graph $G$ such that each vertex $v \in V(G)$ is assigned a list of colors $L(v)$ (we say $L$ is a list assignment for $G$).  The graph $G$ is \emph{$L$-colorable} if there exists a proper coloring $f$ of $G$ such that $f(v) \in L(v)$ for each $v \in V(G)$ (we refer to $f$ as a \emph{proper $L$-coloring} for $G$). The \emph{list chromatic number} of a graph $G$, denoted $\chi_\ell(G)$, is the smallest $k$ such that $G$ is $L$-colorable whenever the list assignment $L$ satisfies $|L(v)| \geq k$ for each $v \in V(G)$. It is immediately obvious that for any graph $G$, $\chi(G) \leq \chi_\ell(G)$.  Erd\H{o}s, Taylor, and Rubin observed in~\cite{ET79} that bipartite graphs can have arbitrarily large list chromatic number. This means that the gap between $\chi(G)$ and $\chi_\ell(G)$ can be arbitrarily large, and we can not hope to find an upper bound for $\chi_\ell(G)$ in terms of just $\chi(G)$.

Graphs in which $\chi(G) = \chi_\ell(G)$ are known as \emph{chromatic-choosable} graphs (see ~\cite{O02}). Many classes of graphs have been conjectured to be chromatic-choosable. The most well known conjecture along these lines is the List Coloring Conjecture (see~\cite{HC92}) which states that every line graph of a loopless multigraph is chromatic-choosable. In addition, total graphs (\cite{BKW97}) and claw free graphs (\cite{GM97}) are conjectured to be chromatic-choosable. On the other hand, there are classes of graphs that are known to be chromatic-choosable. In 1995, Galvin~\cite{G95} showed that the List Coloring Conjecture holds for line graphs of bipartite multigraphs, and in 1996, Kahn~\cite{K96} proved an asymptotic version of the conjecture. Tuza and Voigt~\cite{TV96}  showed that chordal graphs are chromatic-choosable, and Prowse and Woodall~\cite{PW03} showed that powers of cycles are chromatic-choosable.  Recently, Noel, Reed, and Wu~\cite{NR15} proved Ohba's conjecture which states that every graph, $G$, on at most $2 \chi(G) + 1$ vertices is chromatic-choosable.



In this paper, we continue this investigation of chromatic-choosability in the realm of Cartesian Products of graphs. We study the list chromatic number of Cartesian products of graphs through the Alon-Tarsi number as defined by Jensen and Toft in their seminal 1995 book on graph coloring problems~\cite{JT95}. The \emph{Alon-Tarsi number} (AT-number for short) of $G$, $AT(G)$, is the smallest $k$ for which there is an orientation, $D$, of $G$ with max indegree $k-1$ such that the number of even and odd circulations contained in $D$ are different. It follows from the Alon-Tarsi Theorem~\cite{AN92} that $\chi(G) \leq \chi_\ell(G) \leq AT(G)$. We are interested in finding $G$ for which these three parameters are equal.

In the next two subsections, we discuss the known bounds for the list chromatic number of the Cartesian product of graphs as well as the main tool we use to obtain our results: the Alon-Tarsi Theorem. In Section~2, we give a series of sharp bounds on the AT-number of Cartesian Products of the form $G \square H$, where $G$ is such that $V(G)$ can be partitioned into odd cycles and complete graphs, and $H$ is a traceable graph (a graph containing a Hamiltonian path). In Section~3, we apply the bounds from the previous section to give some examples of chromatic-choosable Cartesian products of graphs, or even more strongly we show that these graphs have $\chi(G) = \chi_\ell(G) = AT(G)$.

\subsection{Cartesian Product of Graphs}\label{prelim}

The \emph{Cartesian product} of graphs $G$ and $H$, denoted $G \square H$, is the graph with vertex set $V(G) \times V(H)$ and edges created so that $(u,v)$ is adjacent to $(u',v')$ if and only if either $u=u'$ and $vv' \in E(H)$ or $v=v'$ and $uu' \in E(G)$. Note that $G \square H$ contains $|V(G)|$ copies of $H$ and $|V(H)|$ copies of $G$.  It is also easy to show that $\chi(G \square H) = \max \{\chi(G), \chi(H) \}$.  So, we have that $\max \{\chi(G), \chi(H) \} \leq \chi_\ell(G \square H)$.

\par

There are few results in the literature regarding the list chromatic number of the Cartesian product of graphs. In 2006, Borowiecki, Jendrol, Kr{\'a}l, and Mi{\v s}kuf~\cite{BJ06} showed the following.
\begin{thm}[\cite{BJ06}] \label{thm: Borow}
$\chi_\ell(G \square H) \leq \min \{\chi_\ell(G) + \col(H), \col(G) + \chi_\ell(H) \} - 1.$
\end{thm}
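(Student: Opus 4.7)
The statement is symmetric in the roles of $G$ and $H$ (since $G \square H \cong H \square G$), so it suffices to prove
\[
\chi_\ell(G \square H) \leq \chi_\ell(G) + \col(H) - 1.
\]
The plan is a direct greedy/inductive argument: process the $|V(H)|$ copies of $G$ inside $G \square H$ one by one in a suitable order, and argue that at each step we have enough free colors left in each list to finish off a copy of $G$ using $\chi_\ell(G)$-choosability.

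First I would set $k = \chi_\ell(G)$ and $c = \col(H)$, fix an arbitrary list assignment $L$ for $G \square H$ with $|L(u,v)| \geq k+c-1$ for every $(u,v) \in V(G) \times V(H)$, and then invoke the definition of $\col(H)$ to pick an ordering $v_1, v_2, \ldots, v_n$ of $V(H)$ such that each $v_i$ has at most $c-1$ neighbors among $v_1,\ldots, v_{i-1}$ in $H$. For each $i$, write $G_i = \{(u,v_i) : u \in V(G)\}$ for the $i$-th copy of $G$ in $G \square H$; these copies partition $V(G \square H)$, and edges of $G \square H$ go either within a single $G_i$ or between $G_i$ and $G_j$ along pairs $(u,v_i)(u,v_j)$ with $v_iv_j \in E(H)$.

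Next I would build the proper $L$-coloring $f$ by induction on $i$: suppose $G_1, \ldots, G_{i-1}$ have already been properly $L$-colored. For each vertex $(u,v_i) \in G_i$, define the residual list
\[
L'(u,v_i) = L(u,v_i) \setminus \{f(u,v_j) : j<i \text{ and } v_jv_i \in E(H)\}.
\]
By the chosen ordering of $V(H)$, there are at most $c-1$ values being removed, so $|L'(u,v_i)| \geq (k+c-1)-(c-1)=k$. Since $G_i \cong G$ and $\chi_\ell(G) = k$, the copy $G_i$ admits a proper $L'$-coloring, which I use to extend $f$ to $G_i$. By construction, this extension respects $L$ on $G_i$ and avoids every conflict with previously colored neighbors in earlier copies; conflicts within $G_i$ are avoided because $L'$-coloring of $G_i$ is proper. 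Iterating to $i=n$ completes the proper $L$-coloring of $G \square H$, proving the bound.

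The argument is clean, and there is no real obstacle beyond choosing the right decomposition: one picks the $\col(H)$-ordering on the $H$-side precisely so that each copy of $G$ inherits at most $c-1$ forbidden colors from previously colored copies, leaving exactly the $k$ colors needed to apply $\chi_\ell(G)$-choosability. The only thing to watch is that the ordering property of $\col(H)$ really gives a \emph{uniform} bound of $c-1$ back-neighbors at every step, independent of $u$, which is what allows the residual lists to remain large simultaneously across all of $G_i$.
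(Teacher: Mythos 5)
Your proof is correct and is essentially the standard argument for this bound (the paper itself only cites the result from Borowiecki, Jendrol, Kr\'al, and Mi\v{s}kuf without reproducing a proof): order the copies of $G$ by a $\col(H)$-ordering of $V(H)$, strip at most $\col(H)-1$ colors from each list due to previously colored copies, and finish each copy using $\chi_\ell(G)$-choosability. The one point you flag at the end is handled correctly, since the back-degree bound from the coloring-number ordering depends only on $v_i$ and hence applies uniformly to every vertex $(u,v_i)$ of the $i$-th copy.
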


Here $\col(G)$, the \emph{coloring number} of a graph $G$, is the smallest integer $d$ for which there exists an ordering, $v_1, v_2, \ldots, v_n$, of the elements in $V(G)$ such that each vertex $v_i$ has at most $d-1$ neighbors among $v_1, v_2, \ldots, v_{i-1}$. The coloring number is a classic greedy upper bound on the list chromatic number, and it immediately implies that $\Delta(G)+1$ is an upper bound on the list chromatic number where $\Delta(G)$ is the maximum degree of $G$. Vizing~\cite{V76} extended this by proving the list coloring version of Brooks' Theorem.

\begin{thm}[\cite{V76}] \label{thm: Brooks}
Suppose that $G$ is a connected graph with maximum degree $\Delta(G)$.  If $G$ is neither a complete graph nor an odd cycle, then $\chi_\ell(G) \leq \Delta(G)$.
\end{thm}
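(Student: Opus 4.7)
The plan is to follow the well-known structure of Brooks' theorem, substituting greedy arguments that work with arbitrary lists of size $\Delta = \Delta(G)$ rather than a fixed palette. I would split into cases based on whether $G$ is regular.

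If $G$ is not regular, pick any $v$ with $\deg(v) < \Delta$ and list the vertices $v_1, \ldots, v_n = v$ in non-increasing order of BFS distance from $v$. Every $v_i$ with $i < n$ has a neighbor strictly closer to $v$, which appears later in the order, so at most $\Delta - 1$ of its neighbors are earlier; combined with $\deg(v_n) < \Delta$, this gives $\col(G) \leq \Delta$, and greedy list-coloring in this order succeeds. If $G$ is $\Delta$-regular with $\Delta \leq 2$, the hypothesis forces $G$ to be an even cycle (or smaller), which is elementarily $2$-choosable.

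Now assume $G$ is $\Delta$-regular with $\Delta \geq 3$. If $G$ has a cut vertex, I would induct on the blocks: each block $B$ satisfies $\chi_\ell(B) \leq \Delta$ (since, e.g., no block of a $\Delta$-regular graph with a cut vertex can itself be $K_{\Delta+1}$, as every vertex of such a block would have all $\Delta$ of its neighbors inside the block and thus could not be a cut vertex of $G$), and list-colorings of the blocks are patched at cut vertices by permuting colors. For $G$ $2$-connected and not complete, a classical structural lemma provides vertices $x, y, z$ with $xy, xz \in E(G)$, $yz \notin E(G)$, and $G - \{y, z\}$ connected. Order $V(G) \setminus \{y, z\}$ by BFS from $x$ with $x$ placed last, so that every other vertex has a later neighbor. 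Given any $L$ with $|L(u)| \geq \Delta$, if $L(y) \cap L(z) \neq \emptyset$, pick a common color $c$, assign $y, z \mapsto c$, and greedily color $G - \{y, z\}$ in reverse BFS order; at $x$, only $\Delta - 1$ colors are used on its $\Delta$ neighbors, leaving at least one usable color in $L(x)$.

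The main obstacle is the case $L(y) \cap L(z) = \emptyset$, since $y$ and $z$ are then forced to receive distinct colors. Here $|L(y) \cup L(z)| \geq 2\Delta > |L(x)|$, so without loss of generality one may choose $c_y \in L(y) \setminus L(x)$; assigning $y \mapsto c_y$ leaves $L(x)$ untouched. Then pick any $c_z \in L(z)$, assign $z \mapsto c_z$, delete $c_y$ and $c_z$ from the lists of their remaining neighbors, and complete the coloring via reverse-BFS greedy extension. A routine count shows every non-$x$ vertex of $G - \{y, z\}$ loses at most as many list elements as it loses neighbors, so a color remains at each step; at $x$ we have $|L(x)| \geq \Delta - 1$ (losing at most $c_z$) against only $\Delta - 2$ already-colored neighbors in $G - \{y, z\}$, so a color remains for $x$ as well.
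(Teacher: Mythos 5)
The paper states this result as a citation to Vizing and supplies no proof, so there is nothing internal to compare against; I am assessing your argument on its own merits. Your reduction to the $\Delta$-regular case, the degenerate cases $\Delta\le 2$, and the $2$-connected regular case are all sound: Lov\'{a}sz's lemma producing $x,y,z$ with $xy,xz\in E(G)$, $yz\notin E(G)$ and $G-\{y,z\}$ connected, the reverse-BFS greedy extension, and --- crucially --- the list-specific fix of choosing $c_y\in L(y)\setminus L(x)$ when $L(y)\cap L(z)=\emptyset$ (which needs the harmless normalization $|L(u)|=\Delta$ for all $u$, so that $|L(y)\cup L(z)|=2\Delta>|L(x)|$) is exactly the one new idea the list version requires, and your degree counts at the intermediate vertices and at $x$ are correct.

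The genuine gap is the cut-vertex case. ``Patching at cut vertices by permuting colors'' is a move from ordinary coloring, where all blocks share one palette; in list coloring the blocks see different lists, and knowing $\chi_\ell(B)\le\Delta$ for every block does not let you prescribe, or even vary, the color that the cut vertex receives in an $L$-coloring of $B$. The case is non-vacuous (take two copies of $K_4$, subdivide one edge of each to create degree-$2$ vertices $s_1,s_2$, and add the bridge $s_1s_2$: this graph is $3$-regular with cut vertices), and it does not reduce cheaply to the non-regular case: after greedily coloring one lobe (putting the cut vertex $v$ last, using $\deg(v)<\Delta$ there), extending into the remaining lobe requires coloring a graph in which every uncolored vertex has exactly as many available colors as its remaining degree, so a greedy count no longer closes. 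The standard repair is to prove the stronger degree-choosability theorem of Erd\H{o}s--Rubin--Taylor and Borodin: a connected graph $G$ is colorable from any lists with $|L(u)|\ge \deg_G(u)$ unless every block of $G$ is a complete graph or an odd cycle. That statement is stable under the block induction (one may precolor the cut vertex and recurse into each lobe), its hard case is precisely your $2$-connected argument, and the list version of Brooks' theorem follows because a connected $\Delta$-regular graph with $\Delta\ge 3$ all of whose blocks are complete graphs or odd cycles must be $2$-connected --- a leaf block would force its cut vertex to have degree larger than the non-cut vertices of that block --- and hence is itself a complete graph or an odd cycle, which the hypothesis excludes.
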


Borowiecki et al.~\cite{BJ06} construct examples where the upper bound in their theorem is tight.  Specifically, they show that if $k \in \N$ and $G$ is a copy of the complete bipartite graph $K_{k, (2k)^{k(k+k^k)}}$, then $\chi_\ell(G \square G)=\chi_\ell(G)+\col(G)-1$.  On the other hand, there are examples where the upper bounds from Theorems~\ref{thm: Borow} and~\ref{thm: Brooks} are not tight. For example, suppose that $G$ is a copy of $C_{2k+1} \square P_n$ where $n \geq 3$.  Since $\chi_\ell(C_{2k+1})=\col(C_{2k+1})=3$ and $\chi_\ell(P_n)=\col(P_n)=2$, Theorem~\ref{thm: Borow} tells us that $\chi_\ell(G) \leq 4$.  Similarly, Theorem~\ref{thm: Brooks} tells us $\chi_\ell(G) \leq 4$, yet we will show below that $\chi_\ell(G)=3$.  Another, more dramatic, example where Theorems~\ref{thm: Borow} and~\ref{thm: Brooks} do not produce tight bounds is when we are working with the Cartesian product of two complete graphs.  Suppose $m \geq n \geq 2$, and note that $K_m \square K_n$ is the line graph for the complete bipartite graph $K_{m,n}$.  So, by Galvin's celebrated result (\cite{G95}): $m=\chi(K_m \square K_n) = \chi_\ell(K_m \square K_n)$. However, Theorem~\ref{thm: Borow} only yields an upper bound of $\chi_\ell(K_m \square K_n) \leq m+n-1$ since $\chi_\ell(K_m)=\col(K_m)=m$.  Similarly, Theorem~\ref{thm: Brooks} only tells us $\chi_\ell(K_m \square K_n) \leq m+n-2$.

\par

Alon~\cite{A00} showed that for any graph $G$, $\col(G) \leq 2^{O(\chi_\ell(G))}$.  Combining this result with Theorem~\ref{thm: Borow} implies that we have an upper bound on $\chi_\ell(G \square H)$ in terms of only the list chromatic numbers of the factors.  Borowiecki et al.~\cite{BJ06} conjecture that a much stronger bound holds: there is a constant $A$ such that $\chi_\ell(G \square H) \leq A( \chi_\ell(G) + \chi_\ell(H))$.  While we will not address this conjecture in this paper, we will present results that are improvements on Theorem~\ref{thm: Borow} when the factors in the Cartesian product satisfy certain properties. Our main aim in this note is to illustrate how to utilize the classic Alon-Tarsi Theorem in these situations to get better bounds on the AT-number and consequently the list chromatic number.

\par

\subsection{Alon-Tarsi Number}

Suppose graph $D$ is a simple digraph. We say that $E$ is a \emph{circulation} contained in $D$ if $E$ is a spanning subgraph of $D$ and for each $v \in V(D)$, $d^-_E(v) = d^+_E(v)$ (Note: $d^-_E(v)$ represents the indegree of $v$ in $E$ and $d^+_E(v)$ represents the outdegree of $v$ in $E$).  We say a circulation is \emph{even} (resp. \emph{odd}) if it has an even (resp. odd) number of edges.  Classic results in graph theory tell us that a circulation is a digraph which consists of Eulerian components.  This means a circulation can be decomposed into directed cycles. Alon and Tarsi~\cite{AN92} use algebraic methods, subsequently called the Combinatorial Nullstellensatz, to obtain a remarkable relationship between a special orientation of a graph and a certain graph polynomial. The result is the celebrated Alon-Tarsi Theorem.
\begin{thm} [\textbf{Alon-Tarsi Theorem}] \label{thm: AT}
Let $D$ be an orientation of the simple graph $G$.  Suppose that $L$ is a list assignment for $G$ such that $|L(v)| \geq d^+_D(v) + 1$.  If the number of even and odd circulations contained in $D$ differ, then there is a proper $L$-coloring for $G$.  In addition, if the maximum indegree of $D$ is $m$ and the number of even and odd circulations contained in $D$ differ, then $\chi_\ell(G) \leq m+1$.
\end{thm}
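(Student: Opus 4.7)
The plan is to proceed via the polynomial method. Associate to the orientation $D$ the graph polynomial
\[
P_D(x_{v_1}, \dots, x_{v_n}) \;=\; \prod_{(u,v) \in E(D)} (x_u - x_v),
\]
and observe that $f: V(G) \to \mathbb{N}$ is a proper coloring of $G$ if and only if $P_D(f(v_1),\dots,f(v_n)) \neq 0$, since $P_D$ differs from the standard graph polynomial only by a sign. The goal is to combine this with Alon's Combinatorial Nullstellensatz, which states that if $P$ has total degree equal to $\sum_v a_v$ and the coefficient of $\prod_v x_v^{a_v}$ in $P$ is nonzero, then for any sets $L(v)$ with $|L(v)| \geq a_v + 1$ one can select $c_v \in L(v)$ with $P(c_{v_1}, \dots, c_{v_n}) \neq 0$. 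Here we want to take $a_v = d^+_D(v)$, so that $\sum_v a_v = |E(G)| = \deg P_D$.

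The central step is to identify the coefficient of $\prod_v x_v^{d^+_D(v)}$ in $P_D$ with the signed count of circulations of $D$. When expanding $P_D$, each edge $(u,v)$ contributes either $x_u$ (with sign $+1$) or $-x_v$. A choice corresponds to a subset $T \subseteq E(D)$ consisting of the edges from which we selected the head, contributing sign $(-1)^{|T|}$. I would show that the resulting monomial equals $\prod_v x_v^{d^+_D(v)}$ precisely when, for every vertex $w$, the number of edges in $T$ with head $w$ equals the number of edges in $T$ with tail $w$; that is, when $T$ is a circulation of $D$. Summing the signs over all such choices yields
\[
[\textstyle\prod_v x_v^{d^+_D(v)}]\, P_D \;=\; \sum_{T\text{ circulation of }D} (-1)^{|T|} \;=\; EE(D) - OE(D),
\]
where $EE(D)$ and $OE(D)$ denote the numbers of even and odd circulations. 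The hypothesis says $EE(D) \neq OE(D)$, so this coefficient is nonzero, and the Combinatorial Nullstellensatz immediately furnishes a proper $L$-coloring whenever $|L(v)| \geq d^+_D(v) + 1$.

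For the second statement, the max indegree bound cannot be fed into the first statement directly since the first statement is phrased in terms of outdegree. The trick is to pass to the reverse orientation $D^{-1}$: its outdegrees equal the indegrees of $D$, so $d^+_{D^{-1}}(v) \leq m$ for every $v$; and reversal is an involutive bijection on circulations that preserves the edge count, so $EE(D^{-1}) - OE(D^{-1}) = EE(D) - OE(D) \neq 0$. Applying the first statement to $D^{-1}$ to any list assignment with $|L(v)| \geq m+1$ yields a proper $L$-coloring, so $\chi_\ell(G) \leq m+1$.

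The main obstacle is the coefficient identification in the middle step: matching monomial choices to circulations requires carefully tracking, at each vertex, how outgoing and incoming edge selections interact to produce exactly the exponent $d^+_D(v)$, and this is where the definition of circulation (balanced in- and out-degree) enters naturally. The Combinatorial Nullstellensatz itself I would invoke as a black box, and the reverse-orientation argument for the second part is then a short bookkeeping step.
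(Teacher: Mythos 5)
Your proposal is correct and is essentially the standard Alon--Tarsi argument: the paper does not reprove this theorem but cites it from Alon and Tarsi's 1992 work, whose proof proceeds exactly as you describe, identifying the coefficient of $\prod_v x_v^{d^+_D(v)}$ in $\prod_{(u,v)\in E(D)}(x_u - x_v)$ with the signed count of circulations and invoking the Combinatorial Nullstellensatz. Your reversal trick for converting the indegree hypothesis into the outdegree form needed for the first part is also the standard (and correct) bookkeeping step.
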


\par

Recently, further implications of the Alon-Tarsi Theorem have appeared in the literature.  Before mentioning one of these implications we need some terminology.  For a graph $G$ suppose that for each $v \in V(G)$, $k$ \emph{tokens} are available at $v$.  Two players called the \emph{marker} and \emph{remover} then play the following game on the graph $G$.  For each round, the marker marks a non-empty subset, $M$, of vertices on the graph which uses one token for each marked vertex.  The remover then selects a subset of vertices, $I \subseteq M$, to remove such that $I$ is an independent set of vertices in $G$.  The marker wins by marking a vertex that has no tokens, and the remover wins by removing all the vertices from the graph.  A graph is said to be \emph{$k$-paintable} if the remover has a winning strategy when $k$ tokens are available at each vertex.  The \emph{paint number} or \emph{online choice number} of $G$, $\chi_p(G)$, is the smallest $k$ such that $G$ is $k$-paintable.  We have that $\chi_\ell(G) \leq \chi_p(G)$, and there exist graphs where $\chi_\ell(G) < \chi_p(G)$ (see~\cite{CL14} and~\cite{S09}).

\par

Schauz~\cite{S10} showed that if $D$ is an orientation of $G$ with maximum indegree $m$ such that $D$ satisfies the hypotheses of the Alon-Tarsi Theorem, then $\chi_p(G) \leq m+1$.  So, whenever we use the Alon-Tarsi theorem to bound the list chromatic number of a graph, we actually get the same bound on the paint number of the graph which is a stronger result.
\par

The \emph{Alon-Tarsi number} of $G$, $AT(G)$, is the smallest $k$ for which there is an orientation, $D$, of $G$ with max indegree $k-1$ such that the number of even and odd circulations contained in $D$ are different (i.e. the smallest $k$ for which the hypotheses of the Alon-Tarsi Theorem are satisfied). Jensen and Toft~\cite{JT95} first defined and suggested this graph invariant for study. Hefetz~\cite{H11} studied the AT-number and showed, among other results, that $AT(H) \leq AT(G)$ whenever $H$ is a subgraph of $G$. Recently, Zhu~\cite{Z17} showed that the AT-number of all planar graphs is at most 5, improving the classic list coloring bound on planar graphs.

\par

In summary we know that:
$$\chi(G) \leq \chi_\ell(G) \leq \chi_p(G) \leq AT(G).$$
In general all these inequalities can be strict, our aim is to find classes of graphs where they all are equal - a stronger form of chromatic-choosability. We study this phenomenon for Cartesian products of graphs.


\section{Applying the Alon-Tarsi Theorem}\label{pathcycle}

In this section we will start by proving that the Cartesian product of an odd cycle and path is chromatic-choosable.  Then, we prove several generalizations of this argument.  Suppose that $G$ is the Cartesian product of an arbitrary odd cycle on $2k+1$ vertices and an arbitrary path on $n$ vertices.  Throughout the proof of Theorem~\ref{thm: oddcyclepath} we assume that we form $G$ as follows. Suppose we place the vertices of $C_{2k+1}$ around a circle so that two vertices are adjacent if and only if they appear consecutively along the circle, and we name the vertices in counterclockwise fashion around the circle as: $v_1, v_2, \ldots, v_{2k+1}$ (we also call this ordering of the vertices \emph{cyclic order}).  Similarly we name the vertices of $P_n$ as: $w_1, w_2, \ldots, w_n$ so that two vertices are adjacent if and only if they appear consecutively in this list (we say that this ordering of the vertices is \emph{in order}).

\begin{thm} \label{thm: oddcyclepath}
For any $k, n \in \N$, $AT(C_{2k+1} \square P_n)=3$. Consequently, $C_{2k+1} \square P_n$ is chromatic-choosable.
\end{thm}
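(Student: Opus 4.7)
The lower bound $AT(C_{2k+1} \square P_n) \geq \chi(C_{2k+1} \square P_n) = 3$ is immediate, so the work is in the upper bound. By the Alon-Tarsi Theorem, it suffices to construct an orientation $D$ of $G = C_{2k+1} \square P_n$ with $\Delta^+(D) \leq 2$ such that the number of even circulations in $D$ differs from the number of odd ones.

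My plan is to orient each row (each copy of $C_{2k+1}$ at level $w_j$) as a \emph{source-sink} configuration: designate $v_{\sigma(j)-1}$ as the source and $v_{\sigma(j)}$ as the sink, where $\sigma(j) = ((j-1) \bmod (2k+1)) + 1$, and orient the row's edges as two internally disjoint directed paths from source to sink (namely, the direct edge $v_{\sigma(j)-1} \to v_{\sigma(j)}$ and the ``long way around'' $C_{2k+1}$). This makes each ``diagonal'' vertex $(v_{\sigma(j)}, w_j)$ a row-sink of row in-degree $2$ and each $(v_{\sigma(j)-1}, w_j)$ a row-source of row in-degree $0$. I then orient the columns so that (i)~both column edges incident to a diagonal sink point outward, forcing column in-degree $0$ at each such sink, and (ii)~the remaining column edges form directed subpaths heading away from the sinks and, when necessary, meeting at row-source vertices, which can absorb up to $2$ incoming column edges. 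A case analysis on each vertex's row-role (sink, source, or middle) then confirms $\Delta^+(D) \leq 2$ everywhere.

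For the Alon-Tarsi condition, I set up the vertex balance equations $d^-(v) = d^+(v)$ for a circulation and exploit the source-sink row structure: each row-source has row in-degree $0$, which, combined with the column-orientation discipline, produces a cascade of constraints severely restricting the set of circulations and parameterizing them by a small number of binary choices. Direct enumeration verifies that the signed circulation count is nonzero in small cases: $n \in \{1, 2\}$ give $\Delta = 1$ for every $k$, and $(k,n) = (1,3)$ gives $\Delta = 3$. The main technical obstacle will be carrying out this count uniformly in $k$ and $n$---especially when $n > 2k+1$ and the diagonal sinks ``wrap around'' so that each column contains multiple sinks---and I anticipate resolving it either by induction on $n$ with a careful (non-naive) extension, since appending a cyclically oriented row would make $\Delta$ collapse to zero, or by a transfer-matrix argument that encodes the inter-row constraint propagation as matrix multiplication and yields a closed-form expression for $\Delta$ that is provably nonzero for all $k, n \geq 1$.
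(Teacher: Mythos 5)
Your lower bound and overall strategy (exhibit an orientation with the right degree bound and a nonzero signed circulation count) are of course the right frame, but the proposal has a genuine gap at its center: the Alon--Tarsi condition is never actually established. You verify $\Delta \neq 0$ only for $n \in \{1,2\}$ and $(k,n)=(1,3)$ and then write that you ``anticipate'' resolving the general case by induction or by a transfer-matrix computation. Neither is carried out, and neither is routine for your orientation: because your rows are source--sink configurations rather than directed cycles, the directed-cycle structure of $D$ (and hence the set of circulations) depends delicately on how the column edges interact with the rotating diagonal of sinks, and you have not even described that set, let alone computed the signed count. The subsidiary claim that a ``case analysis confirms $\Delta^+(D)\leq 2$'' is also only asserted; note that a row-source already has row out-degree $2$, so every such vertex must be a local sink of its column, and a row-middle vertex must not be a local column source --- these constraints force the column orientations in a way you need to check is globally consistent (it plausibly is, using the fact that in each column the row-sink level sits immediately above the row-source level, but this must be written down). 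As it stands, the proof establishes the theorem only for the finitely many parameter values you enumerated.

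For contrast, the paper's proof keeps every row oriented as a directed cycle (so the only directed cycles are the $n$ ``base cycles'' plus cycles through one auxiliary edge), adds a single extra edge $e^*$ from $(v_2,w_n)$ to $(v_1,w_1)$ --- using $AT(G)\leq AT(G^*)$ for $G\subseteq G^*$ --- and then shows the \emph{total} number of circulations of $D^*$ is odd: circulations containing all edges or no edges of some base cycle pair off via a toggling involution, and the remaining circulations are single cycles through $e^*$, which are counted by a bijection with certain edge subsequences whose cardinality $(2k)^{n-1}-d_{1,n}$ is shown odd by an easy parity induction. If you want to salvage your approach, you would need to replace the ``anticipated'' step with an actual transfer-matrix or inductive computation of the signed count for your orientation; the paper's auxiliary-edge device is precisely what makes such a uniform count tractable, and you currently have no analogue of it.
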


\begin{proof}
The result is obvious when $n=1$.  So, we assume that $n \geq 2$. Now, we orient the edges of $G = C_{2k+1} \square P_n$ as follows. For each of the $n$ copies of $C_{2k+1}$ in $G$ we orient the edges of each copy in counterclockwise fashion.  Also we orient the edges of the form $\{(v_i, w_j) , (v_i, w_{j+1}) \}$ so that $(v_i, w_j)$ is the tail and $(v_i, w_{j+1})$ is the head (where $1 \leq i \leq 2k+1$ and $1 \leq j \leq n-1$).  We call the oriented version of $G$ digraph $D$.  We immediately note that for each $(v_i,w_j) \in V(D)$:
\[ d_{D}^-((v_i,w_j)) = \begin{cases}
      2 & \textrm{ if $i \neq 1$} \\
      1 & \textrm{ if $i=1.$} \\
   \end{cases} \]
Now, let $G^*$ be the graph obtained from $G$ by adding an extra edge in $G$ with endpoints $(v_1,w_1)$ and $(v_2, w_n)$.  Since $G$ is a subgraph of $G^*$, we have that $AT(G) \leq AT(G^*)$.  We will now show that $AT(G^*) \leq 3$.  To do this we form digraph $D^*$ by orienting the edges of $G^*$ so that all the edges of $G^*$ that are in $G$ are given the same orientation as in $D$ and the edge $\{(v_1,w_1), (v_2, w_n) \}$ is oriented so that $(v_1,w_1)$ is the head and $(v_2, w_n)$ is the tail.  We call this oriented edge $e^*$.  Note that for each $(v_i,w_j) \in V(D^*)$, $d_{D^*}^-((v_i,w_j)) \leq 2$.

\par

To show the number of even and odd circulations contained in $D^*$ differ, we will prove that the number of circulations contained in $D^*$ is odd.  We will refer to the $n$ oriented copies of $C_{2k+1}$ in $D^*$ as the \emph{base} \emph{cycles}.  We name the base cycles: $B_1, \ldots, B_n$ so that $B_i$ is the oriented copy of $C_{2k+1}$ such that all the vertices in $B_i$ have second coordinate $w_i$.  We note that the only directed cycles contained in $D^*$ are the base cycles and cycles that contain the edge $e^*$.  Let $\mathcal{C}$ be the set of all circulations in $D^*$.  We let
\[
  \mathcal{A} =\left\lbrace H \in \mathcal{C} \;\middle|\;
  \begin{tabular}{@{}l@{}}
  $H$ contains all the edges of at least one of the base cycles or\\
    $H$ does not include any edge from at least one of the base cycles
   \end{tabular}
  \right\rbrace.
\]
Suppose $H \in \mathcal{A}$.  We may assume $H$ contains all the edges from the base cycles: $B_{a_1} , \ldots, B_{a_m}$ and does not include any edges from the base cycles $B_{b_1}, \ldots, B_{b_t}$ (Note: at most one of these two lists of base cycles may be empty).  Now, we form another circulation $H' \in \mathcal{A}$ from $H$ as follows.  We delete all the edges of the base cycles: $B_{a_1} , \ldots, B_{a_m}$ and we add all the edges of the base cycles: $B_{b_1}, \ldots, B_{b_t}$.  We immediately note that $H \neq H'$ and this mapping gives us a way to pair up distinct elements of $\mathcal{A}$.  Thus, $|\mathcal{A}|$ is an even number.

\par

Now, let $\mathcal{B}= \mathcal{C}-\mathcal{A}$.  By the definition of $\mathcal{C}$ and $\mathcal{A}$ we have that $\mathcal{B}$ contains all the circulations of $D^*$ that contain at least one edge from each base cycle, but do not include all the edges of any base cycle.  Suppose that $H \in \mathcal{B}$.  We know that $H$ may be decomposed into directed cycles.  No directed cycle in the decomposition of $H$ can be a base cycle.  Since the only directed cycles in $D^*$ are the base cycles and cycles containing $e^*$, we may conclude that $H$ is a single cycle containing $e^*$ that contains at least one edge from each base cycle.  This means that we can view each element of $\mathcal{B}$ as a directed cycle starting with vertex $(v_1, w_1)$ and ending with the edge $e^*$.

\par

Now, for $q,r \in \Z$, $0 \leq q \leq n-2$, and $0 \leq r \leq 2k$, let $e_{(2k+1)q+r}$ be the directed edge with tail $(v_{r+1},w_{q+1})$ and head $(v_{r+1}, w_{q+2})$.  For $0 \leq i \leq n-2$, we let:
$$E_i = \{ e_{(2k+1)q+r} | \textrm{$q=i$ and $0 \leq r \leq 2k$} \}.$$
Intuitively, $E_i$ consists of the directed edges from the copies of $P_n$ that connect vertices in $B_{i+1}$ to vertices in $B_{i+2}$.  Now, suppose that $(e_{a_i})_{i=0}^{n-2}$ is a subsequence of the finite sequence of edges: $(e_i)_{i=0}^{(2k+1)(n-1)-1}$.  We call $(e_{a_i})_{i=0}^{n-2}$ a \emph{level} \emph{subsequence} \emph{of} \emph{edges} if $e_{a_i} \in E_i$ for each $i$, $a_1 \neq 0$, and $a_i \not \equiv a_{i+1} \; \textrm{mod} \; 2k+1$ for each $i \leq n-3$.  Now, let $Q$ be the set of all level subsequences of edges, and let $R$ be the set of all level subsequences of edges that do not have the edge $e_{(2k+1)(n-2)+1}$ as the last edge in the sequence.

\par

We will now construct a bijection between $\mathcal{B}$ and $R$.  Given a sequence of edges in $R$, $(e_{a_i})_{i=0}^{n-2}$, there is a unique cycle in $D^*$ starting at $(v_1, w_1)$ and ending with the edge $e^*$ that includes each edge in $(e_{a_i})_{i=0}^{n-2}$.  To form this cycle, simply follow a portion of the base cycle $B_1$ to get from $(v_1, w_1)$ to $e_{a_0}$.  The fact that $a_1 \neq 0$ guarantees that we must traverse at least one edge of $B_1$.  Then, for each $0 \leq i \leq n-3$, follow a portion of the base cycle $B_{i+2}$ to get from edge $e_{a_i}$ to $e_{a_{i+1}}$.  The fact that $a_i \not \equiv a_{i+1} \; \textrm{mod} \; 2k+1$ for each $i \leq n-3$ guarantees that we must traverse at least one edge of $B_{i+2}$.  Finally, follow a portion of the base cycle $B_n$ to get from $e_{a_{n-2}}$ to $e^*$.  The fact that $R$ contains all level subsequences that do not end with the edge incident to $e^*$ guarantees we must traverse at least one edge of $B_n$.  We immediately notice from this construction that the cycle we form contains at least one edge from each base cycle, but does not include all the edges of any base cycle.  Thus, the cycle we form is in $\mathcal{B}$.  So, we have a function from $R$ to $\mathcal{B}$.  To see that this function has an inverse suppose that $H \in \mathcal{B}$.  We know that we can view $H$ as  a directed cycle starting with $(v_1, w_1)$ and ending with the edge $e^*$.  By the way in which $D^*$ is oriented we know that $H$ must alternate between edge(s) from $B_i$ and an edge from $E_{i-1}$ for $1 \leq i \leq n-1$.  Then, cycle $H$ has edge(s) from $B_n$ followed by $e^*$.  Let $(u_k)_{k=0}^{n-2}$ be the ordered sequence of edges in $H$ that are in $\cup_{i=0}^{n-2} E_i$.  We immediately have that $u_k \in E_k$ for $0 \leq k \leq n-2$ by the way the paths are oriented.  Since $H$ contains at least one edge from each base cycle, $u_0 \neq e_0$, and if $u_k=e_{a'}$ and $u_{k+1}=e_{b'}$, then $a' \not \equiv b' \; \textrm{mod} \; 2k+1$ for $0 \leq k \leq n-3$.  So, $(u_k)_{k=0}^{n-2}$ is a level subsequence of edges by definition.  Finally, since $H$ contains at least one edge from $B_n$, we know that $u_{n-2} \neq e_{(2k+1)(n-2)+1}$.  So, $(u_k)_{k=0}^{n-2} \in R$.  We now have that our function maps $(u_k)_{k=0}^{n-2}$ to $H$ and the inverse maps $H$ to $(u_k)_{k=0}^{n-2}$.    Thus, we have a bijection between $\mathcal{B}$ and $R$ and we conclude that $|\mathcal{B}|=|R|$.

\par

Now, we will show that $|R|$ is an odd number.  We note that $|Q|=(2k)^{n-1}$ since forming an element of $Q$ leaves one with $2k$ distinct choices at each step.  We let $d_{r,n}$ be the number of elements in $Q$ that have a final edge with an index congruent to $r$ mod $2k+1$ (Note: $n$ is the number of vertices in our path and $0 \leq r \leq 2k$).  The following recursive relationships are immediate:
$$d_{r,n} = \sum_{i \in \{0, \ldots, 2k\}, i \neq r} d_{i, n-1} \; \; \textrm{for $n \geq 3$}$$
where $d_{0,2} = 0$ and $d_{i,2}=1$ for each $i \in \{1, \ldots 2k \}$.  An easy inductive argument shows that $d_{0,n}$ is even and $d_{i,n}$ is odd for each $i \in \{1, \ldots 2k \}$ whenever $n \geq 2$.  Since
$$|R|=|Q|-d_{1,n}=(2k)^{n-1}-d_{1,n},$$
we may conclude that $|R|$ is an odd number which immediately implies $|\mathcal{B}|$ is an odd number.

\par

Since $|\mathcal{C}|=|\mathcal{A}|+|\mathcal{B}|$, we have that $|\mathcal{C}|$ is odd.  Thus, $D^*$ contains an odd number of circulations.  This means that the number of even circulations in $D^*$ does not equal the number of odd circulations in $D^*$.  So $AT(G^*) \leq 3$ and hence $AT(G) \leq 3$. We also have that $3 \leq \chi(G)$.  So, $AT(G) = 3$, and we are done.
\end{proof}

{\bf Remark.} It is easy to notice that in our proof of Theorem~\ref{thm: oddcyclepath} we have proven something slightly stronger with regard to list coloring.  Specifically we proved that if we have a list assignment, $L$, for $G$ that assigns two colors to all but one vertex in the first base cycle and three colors to all other vertices of $G$, then there is a proper $L$-coloring for $G$.\\

\par
Having proven Theorem~\ref{thm: oddcyclepath}, it is easy to now classify the list chromatic number of the Cartesian product of an arbitrary cycle and path. Erd\H{o}s et al.~\cite{ET79} classified all graphs with list chromatic number equal to 2. Let $\Theta(l_1, \ldots, l_k)$ with branch vertices $u$ and $v$ be the graph that is the union of $k$ pairwise internally disjoint $u,v$-paths of lengths $l_1, \ldots, l_k$.

\begin{thm}[\cite{ET79}] \label{thm: 2choosable}
Let $G$ be a connected bipartite graph. Then, $\chi_\ell(G)=2$ if and only if $G$ has at most one cycle or the subgraph consisting of the non-cut-edges of $G$ is $\Theta(2,2,2t)$ for some $t \in \N$.
\end{thm}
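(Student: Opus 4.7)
The plan is to prove both directions of the biconditional. For \textbf{sufficiency}, I begin with the observation that pendant vertices can be iteratively removed without affecting $2$-choosability: a degree-$1$ vertex $v$ with neighbor $u$ can always be colored last, since $|L(v)| \geq 2$ forbids only the color assigned to $u$. Iterating this reduction peels off every tree hanging off the non-cut-edge subgraph $H$, so it suffices to verify $2$-choosability of $H$ itself. By hypothesis (and since $G$ is bipartite), $H$ is empty, a single even cycle $C_{2m}$, or $\Theta(2,2,2t)$.

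For an even cycle $C_{2m}$ with a $2$-list assignment, either all lists coincide as $\{a,b\}$ and the standard proper $2$-coloring works, or two adjacent vertices $u, v$ have $L(u) \neq L(v)$. In the latter case, pick $\alpha \in L(v) \setminus L(u)$, assign $\alpha$ to $v$, and greedily color the path $C_{2m} - v$ starting from $v$'s other neighbor and walking back to $u$; each intermediate vertex has at most one forbidden color, and at $u$ the color $\alpha$ is not a constraint since $\alpha \notin L(u)$. For $\Theta(2,2,2t)$ with branch vertices $x,y$, short paths $x w_i y$ for $i = 1, 2$, and long path $x z_1 z_2 \cdots z_{2t-1} y$, the argument is more delicate: one performs a case analysis on the list structure at $x, y, w_1, w_2$, and uses the even parity of the long path to always complete the coloring on it once $x$ and $y$ have been colored.

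For \textbf{necessity}, I would show that if the non-cut-edge subgraph $H$ of a connected bipartite $G$ falls outside this classification, then $H$ must contain one of the known obstructions to $2$-choosability, for instance $\Theta(2,4,4)$, any $\Theta(2t_1,2t_2,2t_3)$ with all $t_i \geq 2$, or two even cycles sharing at most a single vertex. For each such obstruction, exhibit an explicit $2$-list assignment admitting no proper coloring --- the archetypal construction on $\Theta(2,4,4)$ distributes lists drawn from $\{1,2\}, \{1,3\}, \{2,3\}$ so that every attempted extension to the branch vertices fails on at least one internal path. Such a bad list assignment on $H$ extends to one on all of $G$ by assigning to each pendant subtree lists that propagate rather than relax the conflict at its root.

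The main obstacle is the case analysis for $\Theta(2,2,2t)$: proving that it \emph{is} $2$-choosable requires a finite but intricate argument in which the even parity of the long path is crucial, while showing other $\Theta$-graphs and multi-cycle configurations are \emph{not} $2$-choosable requires exhibiting just the right bad list assignment in each case. A secondary challenge on the necessity side is the structural step showing that every connected bipartite $G$ whose non-cut-edge subgraph contains two or more cycles but is not $\Theta(2,2,2t)$ must actually contain one of the listed obstructions.
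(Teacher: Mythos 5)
This theorem is quoted by the paper from Erd\H{o}s, Rubin, and Taylor \cite{ET79} and is not proved there, so there is no in-paper argument to compare against; your proposal has to stand on its own as a reconstruction of the ERT proof. Its skeleton is the right one --- peel degree-one vertices to reduce to the core, verify $2$-choosability of the even cycle and of $\Theta(2,2,2t)$, and for necessity exhibit bad $2$-assignments on a complete list of minimal obstructions together with a structural lemma saying every excluded core contains one of them. The pendant-vertex reduction and the even-cycle argument you give are correct and complete.

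However, the two steps that constitute essentially all of the difficulty are left as declared intentions rather than proofs, so this is a genuine gap rather than a finished argument. First, the $2$-choosability of $\Theta(2,2,2t)$ is the crux of sufficiency, and ``one performs a case analysis on the list structure at $x,y,w_1,w_2$'' does not identify the cases or show they close; this is precisely where $\Theta(2,2,2t)$ differs from, say, $\Theta(2,4,4)$, so the parity of the long path must actually be deployed, not just invoked. Second, on the necessity side your illustrative list of obstructions is incomplete in a way that matters: you omit $\Theta(2,2,2,2)=K_{2,4}$, which is not $2$-choosable but contains none of the obstructions you name (every theta subgraph of $K_{2,4}$ is $\Theta(2,2,2)$, which \emph{is} $2$-choosable, and it has no two cycles meeting in at most one vertex). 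Without $K_{2,4}$ in the list, the structural lemma ``every non-cut-edge subgraph with at least two cycles other than $\Theta(2,2,2t)$ contains an obstruction'' is false, so the necessity direction would fail on $G=K_{2,4}$ itself. (One small simplification available to you: once $H$ is shown non-$2$-choosable, you need not engineer lists on the pendant trees that ``propagate the conflict'' --- any $2$-assignment extending a bad assignment on a subgraph already witnesses non-$2$-choosability of $G$, since a proper $L$-coloring of $G$ restricts to one of $H$.)
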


This is the final ingredient in:

\begin{cor} \label{cor: classify}
For $k \in \N$:
\\
(i)  $\chi(C_{2k+1} \square P_n) = \chi_\ell(C_{2k+1} \square P_n) =3$ for $n \in \N$,
\\
(ii)  $\chi(C_{2k+2} \square P_1) = \chi_\ell(C_{2k+2} \square P_1) =2$, and $\chi_\ell(C_{2k+2} \square P_n) =3$ for $n \geq 2$.
\end{cor}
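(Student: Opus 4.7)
The plan is to package the corollary from three results already available: Theorem~\ref{thm: oddcyclepath} for the odd cycle case, Theorem~\ref{thm: Borow} for the even cycle upper bound, and Theorem~\ref{thm: 2choosable} for the even cycle lower bound. Part (i) should fall out immediately: chaining the known inequalities gives $\chi(C_{2k+1} \square P_n) \le \chi_\ell(C_{2k+1} \square P_n) \le AT(C_{2k+1} \square P_n) = 3$ by Theorem~\ref{thm: oddcyclepath}, and the matching lower bound $\chi \ge 3$ is immediate because $C_{2k+1}$ is a subgraph of $C_{2k+1} \square P_n$.

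For Part (ii), the case $n = 1$ is the trivial observation $C_{2k+2} \square P_1 \cong C_{2k+2}$, together with the classical fact that even cycles are 2-choosable (this is also the $t = 1$ instance of Theorem~\ref{thm: 2choosable}). For $n \ge 2$, I would obtain the upper bound $\chi_\ell \le 3$ directly from Theorem~\ref{thm: Borow} applied with $G = C_{2k+2}$ and $H = P_n$: since $\chi_\ell(C_{2k+2}) = 2$ and $\col(P_n) = 2$, Theorem~\ref{thm: Borow} yields
\[
\chi_\ell(C_{2k+2} \square P_n) \le \chi_\ell(C_{2k+2}) + \col(P_n) - 1 = 2 + 2 - 1 = 3.
\]

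For the matching lower bound $\chi_\ell(C_{2k+2} \square P_n) \ge 3$, I would appeal to Theorem~\ref{thm: 2choosable} to exclude 2-choosability. The graph $C_{2k+2} \square P_n$ is connected and bipartite (a product of bipartite graphs), and for $n \ge 2$ it contains more than one cycle (every copy of $C_{2k+2}$, plus the $4$-cycles spanned between consecutive copies). Moreover it is $2$-edge-connected for $n \ge 2$, so the subgraph of non-cut-edges is the entire graph. To rule out $\Theta(2,2,2t)$, observe that in this theta graph exactly two vertices have degree $3$ while every other vertex has degree $2$, whereas every vertex of $C_{2k+2} \square P_n$ has degree at least $2 + 1 = 3$ when $n \ge 2$. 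So Theorem~\ref{thm: 2choosable} forces $\chi_\ell \ge 3$. There is no substantive obstacle in this proof; the only point requiring a moment's care is verifying the two exceptional clauses of Theorem~\ref{thm: 2choosable}, and each reduces to a one-line observation (a cycle count and a minimum-degree count).
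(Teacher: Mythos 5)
Your proposal is correct and follows essentially the same route as the paper's proof: part (i) from Theorem~\ref{thm: oddcyclepath}, the upper bound in (ii) from Theorem~\ref{thm: Borow} using $\chi_\ell(C_{2k+2})=2$, and the lower bound in (ii) from Theorem~\ref{thm: 2choosable} via the cycle count, absence of cut edges, and the minimum-degree argument ruling out $\Theta(2,2,2t)$. No gaps.
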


\begin{proof}  Statement (i) follows from Theorem~\ref{thm: oddcyclepath}. For statement (ii) notice that when $n \geq 2$, $C_{2k+2} \square P_n$ contains more than one cycle and no cut edges.  Moreover, all the vertices in $C_{2k+2} \square P_n$ have degree at least 3 which means that $C_{2k+2} \square P_n$ is not $\Theta(2,2,2t)$ for any $t \in \N$.  So, Theorem~\ref{thm: 2choosable} implies that $3 \leq \chi_\ell(C_{2k+2} \square P_n)$.  By Theorem~\ref{thm: 2choosable}, we also have that even cycles have list chromatic number equal to 2.  So, Theorem~\ref{thm: Borow} implies $\chi_\ell(C_{2k+2} \square P_n) \leq 2+2-1=3.$
\end{proof}

We now give a natural generalization of the argument of Theorem~\ref{thm: oddcyclepath}. 

\begin{thm} \label{cor: main result}
Suppose that $G$ is a complete graph or an odd cycle with $|V(G)| \geq 3$.  Suppose $H$ is a graph on at least two vertices that contains a Hamilton path, $w_1, w_2, \ldots, w_m$, such that $w_i$ has at most $k$ neighbors among $w_1, \ldots, w_{i-1}$. Then, $AT(G \square H) \leq \Delta(G)+k$.
\end{thm}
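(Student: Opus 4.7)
The plan is to generalize the orientation and circulation-counting argument of Theorem~\ref{thm: oddcyclepath}. I would orient $G \square H$ in two steps. First, in each of the $m$ copies of $G$ (indexed by $w_j \in V(H)$), orient the $G$-edges so that the maximum $G$-indegree is at most $\Delta(G)-1$. When $G = C_{2l+1}$ this is simply the counterclockwise orientation used in Theorem~\ref{thm: oddcyclepath}, giving every vertex $G$-indegree exactly $1$. When $G = K_n$, I would instead use a Hamiltonian-cycle-based orientation of each copy: a classical Hamilton decomposition of $K_n$ when $n$ is odd, and $(n-2)/2$ Hamilton cycles together with a perfect matching when $n$ is even; this yields maximum $G$-indegree at most $\lceil (n-1)/2\rceil \leq n-2 = \Delta(G)-1$ for every $n \geq 3$. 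Second, at each $v \in V(G)$, orient the copy of $H$ using the Hamilton-path ordering: for each $w_a w_b \in E(H)$ with $a<b$, direct $(v,w_a) \to (v,w_b)$. The hypothesis on the Hamilton path forces every $(v, w_j)$ to have $H$-indegree at most $k$, so the resulting orientation $D$ of $G \square H$ has max indegree at most $(\Delta(G)-1) + k = \Delta(G)+k-1$.

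Having bounded the max indegree, it remains to verify that the numbers of even and odd circulations in $D$ (or, if necessary, in a supergraph $D^*$ obtained by adding one or more auxiliary edges in the manner of $e^*$ in Theorem~\ref{thm: oddcyclepath}, chosen so that max indegree remains $\Delta(G)+k-1$) differ. I would count circulations by generalizing the partition $\mathcal{C} = \mathcal{A} \cup \mathcal{B}$ from that proof: $\mathcal{A}$ consists of circulations that either include all edges of some distinguished base cycle or none of its edges, while $\mathcal{B}$ consists of circulations using some but not all edges of every distinguished base cycle. Toggling the inclusion of a chosen base cycle pairs up elements of $\mathcal{A}$, so $|\mathcal{A}|$ is even. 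Elements of $\mathcal{B}$ must decompose as single directed cycles traversing the auxiliary edge and visiting each layer of $G$-copies in order; a bijection with appropriate ``level sequences'' of inter-copy $H$-edges, together with a recursion in the spirit of the $d_{r,n}$ recursion of Theorem~\ref{thm: oddcyclepath}, shows $|\mathcal{B}|$ is odd. The Alon-Tarsi theorem then gives $AT(G \square H) \leq \Delta(G)+k$.

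The main obstacle is the complete-graph case. When $G = K_n$, each copy of $G$ contributes several distinguished base cycles (from its Hamilton decomposition) rather than one, so both the pairing argument for $\mathcal{A}$ and the bijective count for $\mathcal{B}$ must be extended to handle multiple base cycles per layer simultaneously. For $n$ even the perfect-matching edges appearing in the decomposition lie outside every base cycle, and circulations using those edges will need a separate sub-case. The heart of the argument is showing that this multi-base-cycle structure, together with the Hamilton-path orientation of $H$ and any auxiliary edges introduced for parity, still produces the required odd total circulation count.
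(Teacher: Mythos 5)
There is a genuine gap, and it is exactly where you flag it: the complete-graph case. Your plan orients each copy of $K_n$ via a Hamilton decomposition, which does keep the indegree at most $\Delta(G)-1$, but it floods each layer with directed cycles --- not only the $\lfloor (n-1)/2\rfloor$ oriented Hamilton cycles themselves, but every circulation obtainable by mixing arcs from different cycles of the decomposition. The $\mathcal{A}$/$\mathcal{B}$ dichotomy and the level-subsequence bijection from Theorem~\ref{thm: oddcyclepath} rely crucially on each layer containing exactly one directed cycle (the base cycle), so that every circulation either toggles cleanly on base cycles or is a single cycle through $e^*$. With many base cycles per layer that structure collapses, and you give no argument for the parity of the resulting circulation count; you explicitly leave it as ``the heart of the argument.'' As stated, the theorem is not proved for $G=K_n$.

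The paper's proof sidesteps this entirely with a much lighter orientation: when $G=K_n$ it distinguishes a single induced triangle $C$ on $v_{n-2},v_{n-1},v_n$, orients $C$ cyclically, and orients \emph{every other} $G$-edge acyclically from lower to higher index (and likewise all $H$-edges forward along the Hamilton-path order). Then the only directed cycles in $D^*$ are the $m$ copies of $C$ and cycles through the single added arc $e^*$, every member of $\mathcal{B}$ is forced to live inside the subgraph $S \cong C\,\square\,P_m$ plus $e^*$, and the odd count of $|\mathcal{C}|$ is inherited verbatim from Theorem~\ref{thm: oddcyclepath} with $k=1$. The indegree of each vertex is still at most $\Delta(G)-1+k$ because a vertex $v_j$ receives at most $j-1\le n-2$ arcs inside its copy of $K_n$ (or $1+(n-3)=n-2$ if it lies on $C$). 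If you replace your Hamilton-decomposition step with this ``one odd cycle plus an acyclic remainder'' orientation, the rest of your outline (pairing for $\mathcal{A}$, single-cycle structure and level-sequence count for $\mathcal{B}$) goes through essentially as in Theorem~\ref{thm: oddcyclepath}; without that replacement, the argument does not close.
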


Before we prove this theorem a couple of remarks are worth making. We require $m \geq 2$, since we know $\chi_\ell(G \square H) = \Delta(G)+1$ when $m=1$.  Now, suppose $G$ and $H$ satisfy the hypotheses of the Theorem.  We have that $\chi_\ell(G) = \col(G) = \Delta(G)+1$ and $\col(H)-1 \leq k$.  Theorem~\ref{thm: Borow} tells us that $\chi_\ell(G \square H) \leq \Delta(G) + \chi_\ell(H)$, and Theorem~\ref{thm: Brooks} implies that $\chi_\ell(G \square H) \leq \Delta(G) + \Delta(H)$.  So, Theorem~\ref{cor: main result} gives us an improvement on these known bounds if and only if $k < \chi_\ell(H)$ and $k < \Delta(H)$.  It is easy to see that $k < \chi_\ell(H)$ and $k < \Delta(H)$ if and only if $k= \col(H)-1=\chi_\ell(H)-1$ and $d_H(w_m) \neq \Delta(H)$. We show examples in Section~3 where Theorem~\ref{cor: main result} improves these known bounds.  We now present the proof.

\begin{proof}
Suppose $G$ has $n$ vertices.  We name the vertices of $G$ (cyclically if $G$ is a cycle): $v_1, v_2, \ldots, v_n$. $G$ contains an odd cycle, $C$, as an induced subgraph. If $G$ is an odd cycle let $C=G$, and if $G$ is a complete graph let $C$ be the subgraph of $G$ induced by the vertices $v_{n-2}$, $v_{n-1}$, and $v_n$.  For the remainder of the proof assume that $C$ has $2k+1$ vertices where $k \in \N$ (Note: we know $k=1$ when $G$ is a complete graph).  We also let $P$ be the  Hamilton path, $w_1, w_2, \ldots, w_m$, contained in $H$.

\par

Now, consider the graph $G \square H$.  We form digraph $D$ from this graph by orienting its edges as follows.  We begin by orienting each of the $m$ copies of $C$ in a counterclockwise fashion.  Then, for each edge in a copy of $G$ and not in a copy of $C$ with endpoints $(v_r,w_u)$ and $(v_s,w_u)$ with $s > r$, we orient the edge so that $(v_r,w_u)$ is the tail and $(v_s,w_u)$ is the head.  Finally, for each edge in a copy of $H$ with endpoints $(v_u,w_r)$ and $(v_u,w_s)$ with $s > r$, we orient the edge so that $(v_u,w_r)$ is the tail and $(v_u,w_s)$ is the head.

\par

From $D$ we form the digraph $D^*$ by adding a directed edge with tail $(v_{n-2k+1}, w_m)$ and head $(v_{n-2k}, w_1)$.  Note that $D^*$ is a simple digraph since $m \geq 2$.  We will refer to the edge we added as $e^*$.  We immediately note that by the conditions placed on $G$ and $H$, we have that $d_{D^*}^- ((v_r,w_u)) \leq \Delta(G)-1+k$ for each $(v_r,w_u) \in V(D^*)$.  Similar to the proof of Theorem~\ref{thm: oddcyclepath}, we will now prove that the number of circulations in $D^*$ is odd.  We will refer to the $m$ oriented copies of $C$ in $D^*$ as the \emph{base} \emph{cycles}.  We name the base cycles: $B_1, \ldots, B_m$ so that $B_i$ is the oriented copy of $C$ such that all the vertices in $B_i$ have second coordinate $w_i$.  We note that the only cycles in $D^*$ are the base cycles and the cycles that contain $e^*$.  Let $S$ be the subgraph of $D^*$ that is made up of the $m$ oriented copies of $C$ in $D^*$, the oriented copies of $P$ that have first coordinate $v_{n-2k}$, $v_{n-2k+1}$, \ldots, and $v_{n}$, and the edge $e^*$.  Notice that $S$ is $C \square P$ plus edge $e^*$ oriented as it is in the proof of Theorem~\ref{thm: oddcyclepath}.  Let $\mathcal{C}$ be the set of all circulations in $D^*$. We define $\mathcal{A}$ as in the proof of Theorem~\ref{thm: oddcyclepath}.
By the same argument as in the proof of Theorem~\ref{thm: oddcyclepath}, we see that $|\mathcal{A}|$ is even.  Now, let $\mathcal{B}= \mathcal{C}-\mathcal{A}$.  As in the proof of Theorem~\ref{thm: oddcyclepath}, we have that any $K \in \mathcal{B}$ is a single cycle containing $e^*$ that contains at least one edge from each base cycle.  Moreover, any $K \in \mathcal{B}$ must be completely contained in $S$ (since any vertex of the form $(v_\ell,w_u)$ with $l<n-2k$ is in no cycles of $D^*$ and any cycle in $D^*$ that contains an oriented edge of a copy of $H$ that is not in a copy of $P$ must leave out at least one base cycle).

\par

Thus, by the same argument as in the proof of Theorem~\ref{thm: oddcyclepath}, we have that $|\mathcal{B}|$ is odd.  Thus, $|\mathcal{C}|$ is odd, and we have the desired result by the Alon-Tarsi Theorem.
\end{proof}

{\bf Remark.} Note that our proof also works when $G$ is any graph that contains an induced odd cycle $C$ such that every maximum degree vertex in $G$ is either in $C$ or adjacent to $C$.  However, this is not particularly useful for bounding $\chi_\ell(G \square H)$, since when $G$ is not an odd cycle or complete graph Theorems~\ref{thm: Brooks} and~\ref{thm: Borow} yield:
$$\chi_\ell(G \square H) \leq \chi_\ell(G) + \col(H) - 1 \leq \Delta(G) + k + 1 -1 = \Delta(G)+k.$$

\par
Now, we present another extension of Theorem~\ref{thm: oddcyclepath}.  We first need some definitions and notation.  Suppose that $G$ is an arbitrary graph, and $T$ is some subset of $V(G)$.  We write $G[T]$ for the subgraph of $G$ induced by the vertices in $T$.  Now, suppose that $G_1$ and $G_2$ are two arbitrary vertex disjoint graphs. The \emph{join} of the graphs $G_1$ and $G_2$, denoted $G_1 \vee G_2$, is the graph consisting of $G_1$, $G_2$, and additional edges added so that each vertex in $G_1$ is adjacent to each vertex in $G_2$.  When a graph, $G$, consists of $G_1$, $G_2$, and some set of additional edges (possibly empty) that have one endpoint in $V(G_1)$ and the other endpoint in $V(G_2)$ we say that $G$ is a \emph{partial join} of $G_1$ and $G_2$.

\begin{thm} \label{cor: joinpath}
Suppose that $G$ is a graph with the property that there exists a partition, $\{S_1, \ldots, S_m\}$, of $V(G)$ such that for each $i$ $G[S_i]$ is an odd cycle.  For each $i \geq 2$ suppose each vertex in $S_i$ has at most $\rho$ neighbors in $\cup_{j=1}^{i-1} S_j$ (we let $\rho = 0$ in the case that $m=1$). Then, $AT(G \square P_k) \leq 3+\rho$, for any $k \in \N$.
\end{thm}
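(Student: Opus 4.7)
The plan is to extend the strategy of Theorem~\ref{thm: oddcyclepath} by adding one extra edge per ``strip'' $G[S_i]\square P_k$ and to exploit the fact that the inter-strip back-edges of $G$ can be oriented so that no back-edge lies on any directed cycle; this will make the circulation count factor across strips and let Theorem~\ref{thm: oddcyclepath} be applied independently to each strip.

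Assume first that $k \geq 2$. Enumerate each cycle $C_i := G[S_i]$ in cyclic order as $u^{(i)}_1, u^{(i)}_2, \ldots, u^{(i)}_{|S_i|}$ and form $G^*$ from $G \square P_k$ by adding, for each $i \in \{1,\ldots,m\}$, a new edge $e^*_i$ with endpoints $(u^{(i)}_1, w_1)$ and $(u^{(i)}_2, w_k)$; these endpoints differ in both coordinates (we use $k \geq 2$ here to ensure $w_1 \neq w_k$), so each $e^*_i$ is a new edge and $G^*$ is simple. By Hefetz's monotonicity of the AT-number under subgraphs, it suffices to show $AT(G^*) \leq 3+\rho$. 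Orient $G^*$ as the digraph $D^*$ by (i) orienting each of the $k$ copies of $C_i$ cyclically along $u^{(i)}_1 \to \cdots \to u^{(i)}_{|S_i|} \to u^{(i)}_1$, (ii) orienting every non-cycle edge of $G$ between $S_j$ and $S_i$ (with $j<i$) from $S_j$ to $S_i$, (iii) orienting every path-edge as $(v,w_u) \to (v,w_{u+1})$, and (iv) orienting each $e^*_i$ with tail $(u^{(i)}_2, w_k)$ and head $(u^{(i)}_1, w_1)$. A routine tally of these four contributions gives $d^-_{D^*}((v,w_u)) \leq 2+\rho$ at every vertex.

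The key structural observation is that no back-edge lies on any directed cycle of $D^*$: the orientations in (i), (iii), (iv) all keep walks inside a single strip $S_i \times V(P_k)$, while each back-edge in (ii) strictly increases the strip index, so once a walk leaves a strip through a back-edge it can never return. Hence every directed cycle of $D^*$ is confined to one strip, and every circulation of $D^*$ decomposes as a disjoint union of circulations, one per strip. The sub-digraph induced on strip $i$ is precisely the digraph $D^*$ from the proof of Theorem~\ref{thm: oddcyclepath} applied to $C_{|S_i|} \square P_k$, and that proof shows its total number of circulations is odd. Since an odd total count of circulations forces the signed difference $EE_i - OO_i$ in strip $i$ to be odd (in particular nonzero), and since the parity of the edge count of a disjoint union is the sum of the parities, $EE - OO = \prod_{i=1}^m (EE_i - OO_i) \neq 0$, and the Alon--Tarsi theorem yields $AT(G^*) \leq 3+\rho$.

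For $k=1$ we have $G \square P_1 = G$ and the preceding construction degenerates (the putative $e^*_i$ would coincide with the edge $u^{(i)}_1 u^{(i)}_2$ of $G$). Instead, concatenate the cyclic orderings into a linear order $u^{(1)}_1,\ldots,u^{(1)}_{|S_1|}, u^{(2)}_1,\ldots, u^{(m)}_{|S_m|}$ of $V(G)$; each vertex $u^{(i)}_r$ has at most two earlier neighbors inside $S_i$ (its neighbors in $C_i$) and at most $\rho$ earlier neighbors in $\bigcup_{j<i} S_j$, so $\col(G) \leq 3+\rho$ and hence $AT(G) \leq \col(G) \leq 3+\rho$. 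The main obstacle is cleanly establishing the no-back-edge-in-a-directed-cycle claim in the $k \geq 2$ case; once that is in hand, the strips become circulation-independent and Theorem~\ref{thm: oddcyclepath} handles each factor.
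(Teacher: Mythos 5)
Your proof is correct and follows essentially the same route as the paper: add one extra edge per strip $G[S_i]\square P_k$, orient the inter-strip connecting edges so that they lie on no directed cycle, and conclude by multiplying the (odd) per-strip circulation counts supplied by the proof of Theorem~\ref{thm: oddcyclepath}. The only cosmetic difference is that for $k=1$ you invoke $AT(G)\le \col(G)$ via a greedy vertex ordering, whereas the paper explicitly constructs the corresponding acyclic orientation; these amount to the same argument.
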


Note that we obtain Theorem~\ref{thm: oddcyclepath} when $m=1$.  We also notice that when $m=2$, $G$ is a partial join of two odd cycles.  When $m=3$, $G$ is a partial join of the odd cycle $G[S_3]$ and $G[S_1 \cup S_2]$ (where $G[S_1 \cup S_2]$ is itself the partial join of two odd cycles). And so on.

\begin{proof}
First, for each $i$, we index the vertices of $G[S_i]$ in cyclic order as: $v_{i,1}, v_{i,2}, \ldots, v_{i,m_i}$ where we know that $m_i$ is odd for each $i$.  If $e \in E(G)$ is an edge with one endpoint in $S_i$ and one endpoint in $S_j$ with $i \neq j$ we refer to $e$ as a \emph{connecting edge}.

\par

We now turn our attention to the case where $k=1$.  In this case, we must show that $AT(G) \leq 3+\rho$.  We form an oriented version, $D$, of the graph $G$ as follows.  For each odd cycle, $G[S_i]$, orient the edges of the cycle so that $v_{i,l}$ is the tail and $v_{i,l+1}$ is the head for $1 \leq l \leq m_i - 1$.  Then, orient the final edge of the cycle so that $v_{i,1}$ is the tail and $v_{i,m_i}$ is the head.  Finally, for each connecting edge, $e \in E(G)$, with one endpoint in $S_i$ and one endpoint in $S_j$ with $i < j$ orient $e$ so that its tail is in $S_i$ and its head is in $S_j$.  One may now note that for each $v \in S_i$,
$$d_{D}^- (v)  \leq 2 + \rho.$$
\par We now claim that $D$ is acyclic.  First, note that no oriented connecting edge can be in a cycle in $D$ since there is no way in $D$ to get from a vertex in $S_j$ to a vertex in $S_i$ when $i < j$.  This means that if there is a cycle, $C$, in $D$ there must exist an $i$ such that the vertices of $C$ are a subset of the vertices of $S_i$.  However, $D[S_i]$ is acyclic by construction.  So, no such $C$ can exist.  This means that $D$ has one even circulation (the circulation with no edges) and zero odd circulations.  Then $AT(G) \leq 3 + \rho$ as desired.
\\

\par We now turn our attention to the case where $k \geq 2$. Let $H=P_k$. We note that $G \square H$ is made of the disjoint union of graphs:
$$ \sum_{i=1}^m (G[S_i] \square H) $$
plus the connecting edges in each of the $k$ copies of $G$.  Let $G^{**}$ be the graph formed from $G \square H$ by adding an edge in $G[S_i] \square H$ for each $i$ as we did in the proof of Theorem~\ref{thm: oddcyclepath} to form $G^*$.  We call the newly formed subgraph of $G^{**}$ consisting of $G[S_i] \square H$ plus an additional edge, $M_i$ for each $i$.  We form an oriented version, $D$, of $G^{**}$ as follows.  For each subgraph $M_i$ of $G^{**}$, we orient this subgraph just as we oriented $D^*$ in the proof of Theorem~\ref{thm: oddcyclepath}.  Finally, we orient all the connecting edges in all the copies of $G$ just as we did in the case of $k=1$.  One may note that for each $v \in V(D)$, we have that $d_{D}^- (v) \leq 2 + \rho$.  Similar to the case where $k=1$, we note that no connecting edge from any copy of $G$ is in a cycle contained in $D$.  This means that if $C_i$ is the number of circulations in the oriented version of $M_i$, the number of circulations in $D$ is equal to
$$ \prod_{i=1}^m C_i$$
which by the proof of Theorem~\ref{thm: oddcyclepath} is an odd number.  Thus, the number of even circulations in $D$ does not equal the number of odd circulations in $D$, and we have that $AT(G \square H) \leq AT(G^{**}) \leq \rho + 3.$
\end{proof}

Finally, we can easily combine the idea of the proofs of Theorems~\ref{cor: main result} and~\ref{cor: joinpath} to obtain the following.

\begin{thm} \label{cor: generaljoin}
Suppose that $G$ is a graph with the property that there exists a partition, $\{S_1, \ldots, S_m\}$, of $V(G)$ such that for each $i$ $|S_i| \geq 3$ and $G[S_i]$ is an odd cycle or a complete graph. For each $i \geq 2$ suppose each vertex in $S_i$ has at most $\rho_i$ neighbors in $\cup_{j=1}^{i-1} S_j$, and let $\rho_1=0$.  For each $i \geq 1$ we let:
\[ \alpha_i = \begin{cases}
      \rho_i+3 & \textrm{ if $G[S_i]$ is an odd cycle} \\
      \rho_i + |S_i| & \textrm{ if $G[S_i]$ is a complete graph.} \\
   \end{cases} \]
Now, let $\alpha = \max_{i} \alpha_i$.  Suppose $H$ is a graph on at least two vertices that contains a Hamilton path, $w_1, w_2, \ldots, w_n$, such that $w_i$ has at most $k$ neighbors among $w_1, \ldots, w_{i-1}$.  Then, $AT(G) \leq \alpha$ and $AT (G \square H) \leq \alpha+k-1$.
\end{thm}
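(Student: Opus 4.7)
The plan is to fuse the two constructions already in the paper: apply the orientation from Theorem~\ref{cor: main result} inside each $G[S_i]\square H$, and orient every connecting edge from its endpoint in $S_j$ to its endpoint in $S_i$ whenever $j<i$, as in Theorem~\ref{cor: joinpath}, so that no connecting edge can ever sit in a directed cycle.

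First I would handle $AT(G)\le \alpha$ on its own (this also covers the degenerate $H=P_1$ situation). Orient each $G[S_i]$ acyclically: use the $k=1$ orientation from the proof of Theorem~\ref{cor: joinpath} when $G[S_i]$ is an odd cycle (maximum indegree $2$), and a topological orientation along $v_{i,1},\ldots,v_{i,|S_i|}$ when $G[S_i]$ is complete (maximum indegree $|S_i|-1$). Combined with the connecting-edge convention above, the resulting digraph is acyclic, because cycles inside a block are ruled out by construction while cycles crossing blocks are ruled out by the monotone $S$-indexing; hence it has a single (empty) circulation, and the indegree of any vertex of $S_i$ is at most $\Delta(G[S_i])-1+\rho_i=\alpha_i-1\le\alpha-1$. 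This yields $AT(G)\le\alpha$.

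For $AT(G\square H)\le\alpha+k-1$, inside each $G[S_i]\square H$ I select an induced odd cycle $C_i\subseteq G[S_i]$ (equal to $G[S_i]$ in the odd-cycle sub-case, an induced triangle on three vertices of $S_i$ in the complete sub-case), orient exactly as in the proof of Theorem~\ref{cor: main result}, and add the auxiliary edge $e_i^*$ from Theorem~\ref{thm: oddcyclepath}; denote the resulting oriented supergraph of $G[S_i]\square H$ by $M_i$. Connecting edges in all $n$ copies of $G$ are oriented from lower- to higher-indexed block, and I let $D$ be the resulting orientation of the supergraph $\tilde G$ of $G\square H$ obtained by adding the $m$ edges $e_i^*$. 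Splitting into the two sub-cases for $G[S_i]$ and mirroring the indegree count from the proof of Theorem~\ref{cor: main result} (contributions from $C_i$-edges, other $G[S_i]$-edges, $H$-edges, $e_i^*$, and the connecting edges), one obtains $d_D^-((v,w_u))\le (\Delta(G[S_i])-1+k)+\rho_i=\alpha_i+k-2\le\alpha+k-2$ for every vertex $(v,w_u)$ with $v\in S_i$.

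It remains to count circulations. Since every connecting edge points from a lower- to a higher-indexed block and no directed cycle can return to a lower index, no connecting edge lies in any directed cycle of $D$, so every directed cycle of $D$ sits inside some $M_i$. Consequently each circulation of $D$ decomposes uniquely as a disjoint union of one circulation from each $M_i$, giving $|\mathcal{C}(D)|=\prod_{i=1}^{m}|\mathcal{C}(M_i)|$. By the proof of Theorem~\ref{cor: main result} each factor is odd, so the product is odd, the even and odd counts in $D$ differ, and the Alon--Tarsi Theorem yields $AT(\tilde G)\le\alpha+k-1$; since $G\square H$ is a subgraph of $\tilde G$, Hefetz's subgraph monotonicity of $AT$ gives the stated bound. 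The main technical obstacle I anticipate is the indegree bookkeeping, in particular verifying that $e_i^*$ never pushes the indegree of its head beyond $\alpha_i+k-2$; as in the proof of Theorem~\ref{cor: main result}, this is secured by the inequality $k\ge1$, which is implicit from $H$ containing the Hamilton-path edge $w_1w_2$.
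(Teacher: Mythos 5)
Your proposal is correct and is exactly the argument the paper intends: the paper gives no written proof of this theorem beyond the remark that one "combines the ideas of the proofs of Theorems~\ref{cor: main result} and~\ref{cor: joinpath}," and your write-up is precisely that combination, with the indegree bookkeeping ($\Delta(G[S_i])-1+k+\rho_i=\alpha_i+k-2$ in both sub-cases) and the product-of-odd-circulation-counts argument carried out correctly.
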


Both Theorems~\ref{cor: joinpath} and~\ref{cor: generaljoin} are sharp and give improvements over existing bounds as shown in examples in the next section.

\section{Some Examples}\label{examples}

In this section we present some examples where our results from Section~2 improve upon known bounds for the list chromatic number. We let the \emph{kth power} of graph $G$, denoted $G^k$, be the graph with vertex set $V(G)$ where two vertices are adjacent if their distance in $G$ is at most $k$. It is easy to see that $P_n^r$, $1 \leq r \leq n-1$, satisfies $\chi(P_n^r) = \chi_\ell(P_n^r) = \col(P_n^r) = r+1$. We already know Theorem~\ref{cor: main result} is sharp by $r=1$ below.
\begin{cor} \label{cor: powerofpath}
Suppose that $k,r, n \in \N$ are such that $n \geq 2$ and $r \leq n-1$.\\ Then, $\max \{r+1,3\} \leq AT(C_{2k+1} \square P_n^r) \leq r + 2$.
\end{cor}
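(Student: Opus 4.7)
The plan is short because the corollary is designed to be a direct application of Theorem~\ref{cor: main result}. First, for the lower bound I would use the standing chain $\chi(G) \leq AT(G)$ together with the known identity $\chi(G \square H) = \max\{\chi(G),\chi(H)\}$. Since $\chi(C_{2k+1})=3$ and $\chi(P_n^r)=r+1$ (as $P_n^r$ is a perfect graph with clique number $r+1$, or by the trivial greedy argument with the natural ordering), we get $\chi(C_{2k+1}\square P_n^r) = \max\{3,r+1\}$, which bounds $AT$ from below.

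For the upper bound I would invoke Theorem~\ref{cor: main result} with the role of its ``$G$'' played by $C_{2k+1}$ (so $\Delta=2$) and the role of its ``$H$'' played by $P_n^r$. The one piece to verify is the hypothesis on the Hamilton path. Let $w_1,w_2,\ldots,w_n$ be the vertices of $P_n^r$ in the order they appear along the underlying path $P_n$. Since $P_n$ is a subgraph of $P_n^r$, this really is a Hamilton path of $P_n^r$, and the definition of the $r$th power says that for each $i$ the neighbors of $w_i$ among $\{w_1,\ldots,w_{i-1}\}$ are exactly $w_{\max(1,i-r)},\ldots,w_{i-1}$, a set of size at most $r$. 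Theorem~\ref{cor: main result} then yields $AT(C_{2k+1}\square P_n^r)\leq \Delta(C_{2k+1})+r = r+2$.

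There is no real obstacle here; the only thing that requires a second of care is the notational collision between the index $k$ of the odd cycle and the parameter called $k$ in the statement of Theorem~\ref{cor: main result}. I would just rename the latter to $r$ when quoting the theorem, so the reader sees the substitution cleanly. Combining the two bounds gives $\max\{r+1,3\}\leq AT(C_{2k+1}\square P_n^r)\leq r+2$, as required.
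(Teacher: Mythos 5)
Your proposal is correct and matches the paper's argument: the upper bound is exactly the paper's application of Theorem~\ref{cor: main result} with $H=P_n^r$ ordered along the underlying path (so each vertex has at most $r$ earlier neighbors), and the lower bound via $\chi(C_{2k+1}\square P_n^r)=\max\{3,r+1\}\leq AT$ is the same implicit reasoning the paper relies on. Nothing to add.
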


\begin{proof}
Let $G=C_{2k+1}$ and $H=P_n^r$. Name the vertices of $H$: $w_1, w_2, \ldots, w_n$ so that $w_i$ is the $i^{th}$ vertex of the underlying path on $n$ vertices contained in $H$.  Clearly, $H$ contains the Hamilton path: $w_1, w_2, \ldots, w_n$, and for each $i$, $w_i$ has at most $r$ neighbors among $w_1, w_2, \ldots, w_{i-1}$.  Thus, by Theorem~\ref{cor: main result}, we have that $AT(G \square H) \leq 2+r$.
\end{proof}

We note that Theorem~\ref{thm: Borow} only yields that $\chi_\ell(G \square H) \leq r + 3$.  Also, Theorem~\ref{thm: Brooks} yields $\chi_\ell(G \square H) \leq 2 + \Delta(P_n^r)$, and $r+1 \leq \Delta(P_n^r)$ when $n \geq 3$ and $r \leq n-2$.  So, the above example improves upon known bounds on the list chromatic number when $n \geq 3$ and $r \leq n-2$.  We suspect that $C_{2k+1} \square P_n^r$ is often chromatic-choosable, but improving upon our upper bound with the Alon-Tarsi Theorem seems difficult.  For example, it would be impossible to find an orientation of $C_{2k+1} \square P_n^2$ with max indegree of 2 for large values of $n$.
\par
Before we move on to an example for Theorems~\ref{cor: joinpath} and~\ref{cor: generaljoin}, let us note a couple of other facts implied by Theorem~\ref{cor: main result}: $AT(K_n \square P_m)=n$, for $n \ge 3$; and $n-2 \leq AT(C_{2k+1} \square H_n) \leq n-1$ where $H_n=K_n-E(P_4)$ for $n\ge 5$.

\begin{cor} \label{cor: join1}
For $G = K_m \vee C_{2k+1}$, $AT(G \square P_n) = m+3$, where $m, k, n \in \N$. Consequently, $(K_m \vee C_{2k+1}) \square P_n$ is chromatic-choosable.
\end{cor}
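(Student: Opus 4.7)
The plan is to apply Theorem~\ref{cor: generaljoin} to get the upper bound $m+3$ and then match it with the trivial lower bound $\chi \leq AT$. The key observation is that $V(G) = V(K_m \vee C_{2k+1})$ admits the partition $\{S_1, S_2\}$ with $S_1 = V(K_m)$ and $S_2 = V(C_{2k+1})$, where $G[S_1]$ is a complete graph and $G[S_2]$ is an odd cycle. It matters that the complete factor comes first in the ordering: the definition sets $\rho_1 = 0$, so $\alpha_1 = 0 + m = m$; and since every vertex of $C_{2k+1}$ is adjacent (via the join) to every vertex of $K_m$, we have $\rho_2 = m$, giving $\alpha_2 = m + 3$. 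Therefore $\alpha = \max(m, m+3) = m+3$.

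Next I would split on $n$. When $n = 1$, the product $G \square P_1$ is just $G$, and the first conclusion of Theorem~\ref{cor: generaljoin} gives $AT(G) \leq \alpha = m+3$. When $n \geq 2$, the path $P_n$ has a Hamilton path $w_1, \ldots, w_n$ in which each $w_i$ has at most $k=1$ earlier neighbor, so the second conclusion yields $AT(G \square P_n) \leq \alpha + k - 1 = m+3$. Either way, $AT((K_m \vee C_{2k+1}) \square P_n) \leq m+3$.

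For the matching lower bound I would argue that $\chi(K_m \vee C_{2k+1}) = m + 3$: the $m$ vertices of $K_m$ are pairwise adjacent and each is adjacent to every vertex of $C_{2k+1}$, so they require $m$ colors disjoint from the $\chi(C_{2k+1}) = 3$ colors used on the odd cycle. Hence $\chi(G \square P_n) = \max(\chi(G), \chi(P_n)) = m+3$, and the chain $m+3 = \chi(G \square P_n) \leq \chi_\ell(G \square P_n) \leq AT(G \square P_n) \leq m+3$ forces equality throughout. In particular $AT((K_m \vee C_{2k+1}) \square P_n) = m+3$ and $G \square P_n$ is chromatic-choosable.

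There is no real obstacle here; the only subtle point is choosing the ordering of the partition so that the complete factor is placed first, since reversing the order would inflate $\rho_2$ to $2k+1$ and give the weaker estimate $\alpha = 2k + 1 + m$.
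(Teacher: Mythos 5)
Your argument is essentially the paper's for $m \geq 3$, but it has a genuine gap for $m \in \{1,2\}$ (the corollary allows any $m \in \N$). Theorem~\ref{cor: generaljoin} requires $|S_i| \geq 3$ for every part of the partition, and with your choice $S_1 = V(K_m)$ this fails when $m = 1$ or $m = 2$. The restriction is not cosmetic: the underlying orientation argument (inherited from Theorem~\ref{cor: main result}) needs each part to contain an odd cycle so that the parity count of circulations can be carried out, and $K_1$, $K_2$ contain no odd cycle. So for small $m$ you cannot invoke Theorem~\ref{cor: generaljoin} as stated, and your proof of the upper bound collapses in exactly those cases.

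The paper patches this by enlarging the graph: for $m \in \{1,2\}$ it takes $G^{(m)}$ to be a partial join of $C_3$ with $C_{2k+1}$ in which only $m$ of the three triangle vertices are joined to all of $C_{2k+1}$. Then $\{V(C_3), V(C_{2k+1})\}$ is a partition into odd cycles with $\rho = m$, so Theorem~\ref{cor: joinpath} gives $AT(G^{(m)} \square P_n) \leq m+3$, and since $K_m \vee C_{2k+1}$ is a subgraph of $G^{(m)}$, monotonicity of the Alon--Tarsi number under subgraphs (Hefetz) yields $AT(G \square P_n) \leq m+3$. Everything else in your write-up is fine: the ordering of the partition (complete part first), the computation $\alpha = m+3$, the use of $k=1$ for $P_n$, and the lower bound via $\chi(G \square P_n) = \chi(K_m \vee C_{2k+1}) = m+3$ all match the paper. (For $n=1$ the paper instead just observes $AT(G) \leq \col(G) = m+3$, which sidesteps the issue entirely in that case.)
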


\begin{proof}
The result is obvious when $n=1$ since $\col(K_m \vee C_{2k+1}) = m+3$.  So, suppose $n \geq 2$.  Let $G_1 = K_m$ and $G_2= C_{2k+1}$. Since $\chi(G_1 \vee G_2) = \chi(G_1) + \chi(G_2)$, we have that $m+3 = \chi(G) = \chi(G \square P_n)$.
\par
First, consider the case where $m \geq 3$.  Let $\{S_1, S_2 \}$ be the partition of $V(G)$ where $S_1=V(G_1)$ and $S_2=V(G_2)$.  Note $G[S_1]$ is a complete graph, and $G[S_2]$ is an odd cycle.  Also, each vertex in $S_2$ has exactly $m$ neighbors in $S_1$.  So, by Theorem~\ref{cor: generaljoin} we have that $AT(G \square P_n) \leq m+3$.
\par
Finally, suppose $m=1,2$.  Let $G^{(m)}$ be a partial join of $G_3=C_{3}$ and $G_2$ so that $m$ vertices in $V(G_{3})$ are adjacent to all the vertices in $V(G_2)$, and the other $3-m$ vertices in $V(G_3)$ are not adjacent to any of the vertices in $V(G_2)$.  Now, let $\{S_3, S_4 \}$ be the partition of $V(G^{(m)})$ where $S_3=V(G_3)$ and $S_4=V(G_2)$. Note $G^{(m)}[S_3]$ and $G^{(m)}[S_4]$ are odd cycles, and each vertex in $S_4$ has exactly $m$ neighbors in $S_3$.  So, by Theorem~\ref{cor: joinpath} we have that $AT(G^{(m)} \square P_n) \leq m+3$.  The result follows since $G \square P_n$ is a subgraph of $G^{(m)} \square P_n$.
\end{proof}

We note that when $n \geq 2$ Theorem~\ref{thm: Borow} only yields that $\chi_\ell((K_m \vee C_{2k+1}) \square P_n) \leq m+4$.  Also, when $n \geq 3$, Theorem~\ref{thm: Brooks} only tells us $\chi_\ell((K_m \vee C_{2k+1}) \square P_n) \leq \max \{m+4, m + 2k + 2\}$.

There is still much to be discovered about the list chromatic number of the Cartesian product of graphs.  Aside from the important conjectures proposed in~\cite{BJ06}, this paper also provides us with some interesting questions. Specifically, an ambitious question would be: Can we determine when $G \square H$ will be chromatic-choosable based upon some property of the factors? We further explore this question in a following paper~\cite{KM2}. Simpler questions, motivated by Corollaries~\ref{cor: classify} and~\ref{cor: powerofpath}, include:  For what graphs $G$, is $G \square P_n$ chromatic-choosable? When is $C_{2k+1} \square P_n^r$ chromatic-choosable?  For the first of these questions one may conjecture based upon the results of this paper that if $G$ is chromatic-choosable and $\chi(G) \geq 3$, then $G \square P_n$ chromatic-choosable.  However, this conjecture is false since one can construct a 3-assignment to show that $3< \chi_\ell(C_6^2 \square P_2)$.

\end{document}